\documentclass[11pt]{article}
\usepackage[a4paper,margin=2cm]{geometry}
\usepackage[english]{babel}
\usepackage{latexsym,amsmath,enumerate,graphics,enumerate,amsthm,tikz,hyperref,float}
\usepackage[affil-it]{authblk}
\usepackage{enumerate,amsthm,dsfont,pstricks}
\usepackage{latexsym,amsmath,amssymb,amscd,wrapfig,graphicx}

\newtheorem{theorem}{Theorem}[section]
\newtheorem{lemma}[theorem]{Lemma}
\newtheorem{proposition}[theorem]{Proposition}

\theoremstyle{definition}

\theoremstyle{remark}
\newtheorem{remark}[theorem]{Remark}

\let\phi=\varphi
\def\epsilon{\varepsilon}

\def\0{\mathbf{0}}

\newcommand{\comment}[1]{}

\numberwithin{equation}{section}

% definitions specific to this author guide only
%\newcommand*{\code}[1]{{\mdseries\texttt{#1}}}
%\newcommand*{\pkg}[1]{{\mdseries\textsf{#1}}}
%\renewcommand{\topfraction}{0.99}
%\renewcommand{\contentsname}{Contents\\{\footnotesize\normalfont(A table
%of contents should normally not be included)}}
%
\textheight = 240mm
\textwidth = 176mm

\let\epsilon=\varepsilon

%-----------------BEGINDOCUMENT--------------------------------------

\makeatletter
\def\@maketitle{%
  \newpage
  \null
  \vskip 2em%
  \begin{center}%
  \let \footnote \thanks
    {\Large\bfseries \@title \par}%
    \vskip 1.5em%
    {\normalsize
      \lineskip .5em%
      \begin{tabular}[t]{c}%
        \@author
      \end{tabular}\par}%
    \vskip 1em%
    {\normalsize \@date}%
  \end{center}%
  \par
  \vskip 1.5em}
\makeatother

\begin{document}

\title{\sc \huge Monotone dynamical systems with dense periodic points}

\author{Bas Lemmens%
\thanks{Email: \texttt{B.Lemmens@kent.ac.uk}}}
\affil{School of Mathematics, Statistics \& Actuarial Science,
University of Kent, Canterbury, CT2 7NX, United Kingdom}

\author{Onno van Gaans%
\thanks{Email: \texttt{vangaans@math.leidenuniv.nl}}}
\affil{Mathematical Institute, Leiden University, P.O.Box 9512, 2300 RA Leiden, The Netherlands}

\author{Hent van Imhoff%
\thanks{Email: \texttt{hvanimhoff@gmail.com}}}
\affil{Mathematical Institute, Leiden University, P.O.Box 9512, 2300 RA Leiden, The Netherlands}

\maketitle
\date{}
%\date{\today}

\begin{abstract}
In this paper we prove a recent conjecture by M. Hirsch, which says that if $(f,\Omega)$ is a discrete time monotone dynamical system, with $f\colon \Omega\to\Omega$  a homeomorphism  on an open connected subset of a finite dimensional vector space, and the periodic points of $f$  are dense in $\Omega$, then  $f$ is periodic.
\end{abstract}

{\small {\bf Keywords:} Chaos, dense periodic points, monotone dynamical systems.}

{\small {\bf Subject Classification:} Primary 37C25; Secondary 47H07 }

%\tableofcontents

\section{Introduction}
The theory of monotone dynamical systems concerns the behaviour of dynamical systems on subsets of real vector spaces that preserve a partial ordering induced by a cone in the vector space. Pioneering studies by M.\,Hirsch \cite{H1,H2,H3} and numerous subsequent works, see \cite{DH,H3, HS, LNBook,PT,Sm} and the references therein, showed that under  suitable additional conditions the generic behaviour of such dynamical systems cannot be very complex. Common additional conditions include smoothness conditions on the system and various strong forms of monotonicity. 

In this paper we will consider discrete time dynamical systems $(f,\Omega)$, where $\Omega$ is an open connected subset of a finite dimensional real vector space $V$ and $f\colon \Omega\to \Omega$ is a homeomorphism which is monotone with respect to a solid closed cone $C\subseteq V$: so, $f(x)\leq_C f(y)$ whenever $x\leq_C y$ and $x,y\in \Omega$.  For such discrete dynamical systems the complexity of the generic behaviour is not well understood. Recently, however, M.\,Hirsch \cite{H4} showed that if the cone $C$ is polyhedral, then the system cannot display chaotic behaviour in the following sense. He showed that if such  a monotone dynamical system $(f,\Omega)$ has a dense set of periodic points in $\Omega$, then $f$ is periodic, that is to say, there exists an integer $p\geq 1$ such that $f^p(x)= x$ for all $x\in \Omega$. Furthermore, he conjectured that this result holds for general solid closed cones in finite dimensional vector spaces.  
The main purpose of this paper is to confirm this conjecture.  

As in \cite{H4} we will also use the following theorem, which is a direct consequence of  Montgomery \cite{Mo}. 
\begin{theorem}[Montgomery]\label{Mont} If $f\colon \Omega \to \Omega$ is a homeomorphism of an open connected subset $\Omega$ of a finite dimensional  real vector space $V$ and each $x\in \Omega$ is a periodic point of $f$, then $f$ is periodic.  
\end{theorem}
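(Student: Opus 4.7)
The plan is to combine Baire category with the $f$-invariance of iterated fixed-point sets, then invoke connectedness of $\Omega$. Set $A_N := \{x \in \Omega : f^N(x) = x\}$; this is closed in $\Omega$ and $f$-invariant since $f$ commutes with $f^N$. The pointwise-periodicity hypothesis gives $\Omega = \bigcup_{N \geq 1} A_N$. Because $\Omega$ is open in a finite-dimensional real vector space it is locally compact Hausdorff, hence a Baire space, so some $A_{N_0}$ has nonempty interior $B_{N_0}$; on $B_{N_0}$ one has $f^{N_0} = \mathrm{id}$, and $B_{N_0}$ is itself $f$-invariant because the homeomorphism $f$ preserves interiors.

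Applying the same Baire argument to each nonempty open $W \subseteq \Omega$ yields that $U := \bigcup_{N \geq 1} \mathrm{int}(A_N)$ is open, dense and $f$-invariant. The critical step is then to upgrade density to $U = \Omega$: given a boundary point $x \in \partial U \cap \Omega$, the point $x$ is periodic of some period $p$ and every neighbourhood of $x$ meets $U$. Here I would invoke the topological rigidity of fixed-point sets inside a finite-dimensional ambient space --- essentially that such a fixed set cannot have a proper ``thin'' boundary inside $\Omega$ without violating dimension-theoretic considerations on $\R^n$. This rigidity forces $x \in \mathrm{int}(A_p) \subseteq U$, so $U$ is closed in $\Omega$, hence clopen; the connectedness of $\Omega$ then gives $U = \Omega$.

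To extract a single uniform period from the resulting open cover $\{\mathrm{int}(A_N)\}_{N \geq 1}$ of $\Omega$, I would use that this family is directed under divisibility, since $\mathrm{int}(A_N) \subseteq \mathrm{int}(A_{kN})$, and combine this with a covering/exhaustion argument on an increasing sequence of compact subsets of $\Omega$ (together with the $f$-invariance of each $\mathrm{int}(A_N)$, so that orbits cannot straddle different cover elements) to produce a single $N$ with $f^N = \mathrm{id}$ on all of $\Omega$. The main obstacle I anticipate is the topological rigidity step in the middle paragraph: ruling out a nontrivial boundary for $\mathrm{int}(A_p)$ inside $\Omega$ is the geometric heart of Montgomery's original argument and is where the finite-dimensional manifold structure of $\Omega$ is essential.
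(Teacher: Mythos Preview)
The paper does not give its own proof of this statement: Theorem~\ref{Mont} is simply quoted as a consequence of Montgomery's 1937 paper and used as a black box. So there is no in-paper argument to compare your proposal against.

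As a standalone proof, your outline has the correct opening move (the Baire category argument producing some $A_{N_0}$ with nonempty interior, and the density of $U=\bigcup_N \mathrm{int}(A_N)$), but the two remaining steps are not actually carried out. You yourself flag the middle step --- passing from ``$U$ dense'' to ``$U=\Omega$'' --- as the place where one must ``invoke topological rigidity'' and call it ``the geometric heart of Montgomery's original argument''; that is accurate, but it means the proposal is an outline of Montgomery's strategy rather than a proof. Nothing you wrote explains \emph{why} a boundary point $x\in\partial U\cap\Omega$ must lie in some $\mathrm{int}(A_p)$; this is exactly the step that requires the dimension-theoretic/local-degree input specific to manifolds, and without it the argument does not close.

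The final step is also gappy. Directedness of $\{\mathrm{int}(A_N)\}$ under divisibility together with a compact exhaustion only gives, for each compact $K_j\subset\Omega$, an integer $N_j$ with $f^{N_j}=\mathrm{id}$ on $K_j$; nothing prevents $N_j\to\infty$ on a non-compact $\Omega$, and $f$-invariance of the sets $\mathrm{int}(A_N)$ does not by itself bound the periods globally. One really needs a connectedness argument showing that the \emph{minimal} local period is locally constant (or, equivalently, that once $f^{N_0}=\mathrm{id}$ on a nonempty open set, the closed set $A_{N_0}$ is also open), and that again feeds back into the same rigidity input you left unproved. In short: correct skeleton, but both substantive steps are deferred to exactly the content of Montgomery's theorem, which is what the paper chose to cite rather than reprove.
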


\section{Preliminaries}

We begin by recalling some basic terminology. Throughout the paper $V$ will be a  finite dimensional real vector space equipped with the usual norm topology.  A {\em cone} $C\subseteq V$ is a convex set such that $\lambda C\subseteq C$ for all $\lambda \geq 0$ and $C\cap -C =\{0\}$. The cone is said to be {\em solid} if the interior of $C$, denoted $C^\circ$, is non-empty. We will only be working with solid closed cones. Given a solid closed cone $C$, the {\em dual cone} of $C$ is given by $C^* =\{\phi\in V^*\colon \phi(x)\geq 0 \mbox{ for all } x\in C\}$, which is also solid and closed. 
Let us fix $u\in C^\circ$. Then the {\em state space} of $C$ is defined by 
\[
S_C =\{\phi \in C^*\colon \phi(u) =1\},
\]
which is a compact convex set that spans $V^*$. We write $\partial S_C$ to denote the boundary of $S_C$ with respect to its affine span.

The  cone $C$ induces a partial ordering $\leq_C $ on $V$ by $x\leq_C y$ if $y-x\in C$. We shall write $x<_Cy$ if $x\leq_C y$ and $x\neq y$, and write $x\ll_C y$ if $y-x \in C^\circ$. 
Given $x,z\in V$ we define the  {\em order-interval} $[x,z]_C :=\{y\in V\colon x\leq_C y\leq_C z\}$. Note that if $x\ll_C z$, then $[x,z]_C$ is a compact, convex set with non-empty interior, denoted $[x,z]_C^\circ$, and $[x,z]_C^\circ = \{y\in V\colon x\ll_C y\ll_C z\}$.  
A map $f\colon \Omega\to\Omega$, where $\Omega\subseteq V$, is said to be {\em monotone} (with respect to a cone $C$), if $x\leq_C y$ and $x,y\in\Omega$ implies $f(x)\leq_C f(y)$.  

We also use the following basic dynamical systems terminology. We denote a discrete time dynamical system by a pair $(f,\Omega)$, so $f\colon \Omega\to\Omega$. We say that $x\in\Omega$ is a {\em periodic point} in $(f,\Omega)$ if there exists an integer $p\geq 1$ such that $f^p(x) =x$. The least such $p\geq 1$ is called the {\em period} of $x$. A periodic point with period 1 is called a {\em fixed point}. 
The set of all periodic points of $(f,\Omega)$ is denoted $\mathrm{Per}(f)$, and the set of all fixed points of $f$ is denoted $\mathrm{Fix}(f)$.  A periodic point $x\in \Omega$ of $f$ with period $p$  is said to be {\em stable} if  there exists a 
neighbourhood $U\subseteq \Omega$ of $x$ such that $f^p(U)\subseteq U$. 

\begin{lemma}\label{stable}
If $(f,\Omega)$ be a monotone dynamical system, where $f\colon \Omega\to\Omega$ is a homeomorphism and $\mathrm{Per}(f)$ is dense in $\Omega$, then 
\begin{enumerate}[(i)] 
\item for each $x\in \Omega$ there exists $y,z\in\mathrm{Per}(f)$ such that $y \ll_C x\ll_C z$. 
\item each periodic point of $f$ is stable. 
\end{enumerate}
\end{lemma}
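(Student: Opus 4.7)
For part (i), fix any $x \in \Omega$ and any $u \in C^\circ$. Since $\Omega$ is open, $x - \epsilon u$ lies in $\Omega$ for all sufficiently small $\epsilon > 0$, while $\epsilon u \in C^\circ$ gives $x - \epsilon u \ll_C x$. Hence $(x - C^\circ) \cap \Omega$ is a nonempty open subset of $\Omega$, and density of $\mathrm{Per}(f)$ produces a periodic point $y$ inside it, which satisfies $y \ll_C x$ by construction. A symmetric argument yields $z \in \mathrm{Per}(f)$ with $x \ll_C z$.

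For part (ii), let $x \in \Omega$ have period $p$ and set $g := f^p$, so $g$ is a monotone homeomorphism of $\Omega$ with $g(x) = x$. The plan is to construct an open neighbourhood $U \subseteq \Omega$ of $x$ with $g(U) \subseteq U$. Fix $u \in C^\circ$; because $C$ is a solid closed (hence pointed) cone in the finite dimensional space $V$, the order intervals $A_\epsilon := [x - \epsilon u, x + \epsilon u]_C$ are compact, nested, and satisfy $\bigcap_{\epsilon > 0} A_\epsilon = \{x\}$. A standard compactness argument then furnishes $\epsilon_0 > 0$ with $A_{\epsilon_0} \subseteq \Omega$. The sets $A_{\epsilon_0}^\circ \cap (x - C^\circ)$ and $A_{\epsilon_0}^\circ \cap (x + C^\circ)$ are nonempty open subsets of $\Omega$, so density of $\mathrm{Per}(f)$ yields $y, z \in \mathrm{Per}(f)$ inside them with $y \ll_C x \ll_C z$ and $[y, z]_C \subseteq A_{\epsilon_0} \subseteq \Omega$. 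Letting $N$ be a common multiple of $p$ and of the $f$-periods of $y$ and $z$, the map $f^N$ fixes each of $x, y, z$, so monotonicity gives $f^N([y, z]_C) \subseteq [y, z]_C$. Setting $W := \bigcup_{i = 0}^{N/p - 1} g^i([y, z]_C) \subseteq \Omega$, one obtains $g(W) \subseteq W$, since $g^{N/p} = f^N$ maps $[y, z]_C$ back into itself. Taking $U := W^\circ$, the strict inequalities $y \ll_C x \ll_C z$ place $x$ in $[y, z]_C^\circ \subseteq W^\circ = U$; and since $g$ is a homeomorphism, $g(U)$ is open and $g(U) \subseteq g(W) \subseteq W$, hence $g(U) \subseteq W^\circ = U$.

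The main obstacle is that the periodic points $y, z$ supplied by part (i) are $f$-periodic with periods generally unrelated to $p$, so that $[y, z]_C$ is only invariant under the higher power $f^N$ rather than under $g = f^p$ itself. The cyclic union $W = \bigcup_i g^i([y, z]_C)$ is precisely what converts $f^N$-invariance into $g$-invariance. A secondary technical point, ensuring $[y, z]_C \subseteq \Omega$, is handled upstream by first localising $y$ and $z$ inside the compact order interval $A_{\epsilon_0}$ around $x$.
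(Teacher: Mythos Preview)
Your proof is correct. Part (i) is essentially identical to the paper's argument. For part (ii), both you and the paper start from periodic $y\ll_C x\ll_C z$ and use the $f^p$-iterates to manufacture an $f^p$-invariant neighbourhood, but the constructions are dual: the paper takes the \emph{intersection}
\[
U=\bigcap_{k=0}^{s-1}[f^{kp}(y),f^{kp}(z)]_C^\circ
\]
(with $s=\mathrm{lcm}$ of the periods of $y$ and $z$), whereas you form the \emph{union} $W=\bigcup_{i=0}^{N/p-1} g^i([y,z]_C)$ and then pass to its interior. The paper's version is shorter to write down, but for $U$ to be a neighbourhood of $x$ one needs $f^{kp}(y)\ll_C x\ll_C f^{kp}(z)$ for every $k$, which tacitly relies on a monotone homeomorphism preserving the strict relation $\ll_C$; it also leaves implicit why the relevant order-intervals sit inside $\Omega$. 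Your union-then-interior construction sidesteps both issues: the invariance $g(W)\subseteq W$ uses only ordinary $\leq_C$-monotonicity plus the open-map property of $g$, and you explicitly localise so that $[y,z]_C\subseteq\Omega$. So the two routes share the same skeleton, but your execution is a little more self-contained.
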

\begin{proof}
The first assertion follows directly from that fact that $(x+C^\circ)\cap \Omega$ and $(x-C^\circ)\cap \Omega$ are non-empty open sets, and $\mathrm{Per}(f)$ is dense in $\Omega$. 

To prove the second assertion let $x$ be a periodic point of $f$ with period $p$. Let  $y,z\in\mathrm{Per}(f)$ such that $y \ll_C x\ll_C z$. Suppose that $y$ has period $q$, and $z$ has period $r$. Let $s$ be the least common multiple of $q$ and $r$. Then 
\[
U=\bigcap_{k=0}^ {s-1} [f^{kp}(y), f^{kp}(z)]_C^\circ
\]
is a neighbourhood of $x$ such that $f^p(U) \subseteq U$. 
\end{proof}
 
The next proposition is a consequence of \cite[Proposition 6]{H4}. As the proof in \cite{H4} uses advanced results from algebraic topology, we include a more elementary proof for the reader's convenience. 
\begin{proposition}\label{dense}
Let $(f,\Omega)$ be a monotone dynamical system, where $f\colon \Omega\to\Omega$ is a homeomorphism and suppose that $\mathrm{Per}(f)$ is dense in $\Omega$. 
If $x\in\Omega$, then $\mathrm{Per}(f)$ is dense in $(x+\partial C)\cap \Omega$ and $(x-\partial C)\cap \Omega$.
\end{proposition}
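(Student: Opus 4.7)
Fix $w \in (x+\partial C) \cap \Omega$ and $\epsilon > 0$. The goal is to produce a periodic point $\tilde w \in (x+\partial C) \cap \Omega$ with $\|\tilde w - w\| < \epsilon$. My plan is a Brouwer fixed-point argument on a compact convex order interval that straddles $x+\partial C$, coupled with an approximation of $x$ by periodic points.

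Since $w - x \in \partial C$, I first pick a supporting functional $\phi \in C^* \setminus \{0\}$ with $\phi(w - x) = 0$ and a vector $u \in C^\circ$. For sufficiently small $\delta > 0$, the point $w + \delta u$ lies in $x + C^\circ$ (because $(w - x) + \delta u \in \partial C + C^\circ \subseteq C^\circ$), while $w - \delta u$ lies outside $x + C$ (because $\phi((w - \delta u) - x) = -\delta \phi(u) < 0$). Using the density of $\mathrm{Per}(f)$, I choose periodic $a, b \in \mathrm{Per}(f)$ within $\epsilon$ of $w$, respectively close to $w - \delta u$ and $w + \delta u$, satisfying $a \ll_C w \ll_C b$, $a \notin x + C$, and $b \in x + C^\circ$; these are all open conditions that can be imposed simultaneously.

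Let $N$ be the least common multiple of the periods of $a$ and $b$. Then $f^N$ fixes $a$ and $b$ and, by monotonicity, preserves the compact convex order interval $[a, b]_C$ which contains $w$. Brouwer's fixed point theorem yields a fixed point of $f^N$ in $[a, b]_C$, hence a periodic point of $f$ close to $w$; moreover $a$ and $b$ are themselves fixed points of $f^N$ sitting on opposite sides of $x + \partial C$.

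The crux---and the main obstacle---is to force a fixed point of $f^N$ onto $x + \partial C$ itself, rather than into $x + C^\circ$ or into $V \setminus (x + C)$. A natural route is to work inside the compact convex subset $K = [a, b]_C \cap (x + C)$, which contains $w$ but not $a$. When $x$ is itself periodic with period dividing $N$, one has $f^N(x) = x$, so by monotonicity $f^N$ maps $(x + C) \cap \Omega$ into itself; invoking invariance of domain for the homeomorphism $f^N$, one further obtains $f^N((x + C^\circ) \cap \Omega) \subseteq (x + C^\circ) \cap \Omega$, and hence $f^N$ preserves the compact set $(x + \partial C) \cap [a, b]_C$. An appropriate fixed-point argument on this set then supplies a periodic point of $f$ on $x + \partial C$. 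For general, non-periodic $x$, I would approximate $x$ by periodic points $x_n \in \mathrm{Per}(f)$ with $x_n \to x$ (supplied by Lemma~\ref{stable}(i)), run the preceding construction with $x_n$ in place of $x$ to produce periodic points $\tilde w_n \in x_n + \partial C$ near $w$, and extract a subsequential limit. The limit automatically lies in $x + \partial C$ since $\partial C$ is closed, and the principal difficulty is to bound the periods of the $\tilde w_n$ uniformly so that the limit remains periodic for $f$; this bound should rest on the stability of $a$ and $b$ given by Lemma~\ref{stable}(ii), replacing the algebraic-topological input used for the polyhedral case in \cite[Proposition 6]{H4}.
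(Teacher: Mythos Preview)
Your plan has two genuine gaps, and both sit exactly where the real work is.

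\textbf{Invariance of $x+\partial C$ fails.} Even when $x$ is periodic with period dividing $N$, your deduction that $f^N$ preserves $(x+\partial C)\cap [a,b]_C$ is not valid. Monotonicity and $f^N(x)=x$ give $f^N\big((x+C)\cap\Omega\big)\subseteq (x+C)\cap\Omega$, and invariance of domain then gives $f^N\big((x+C^\circ)\cap\Omega\big)\subseteq (x+C^\circ)\cap\Omega$; but nothing prevents a point of $x+\partial C$ from being sent into $x+C^\circ$. (Concretely, on $\mathbb{R}^2$ with the standard cone, $g(s,t)=(s+\max(t,0)^2,\,t)$ is a monotone homeomorphism fixing $0$ that sends $(0,1)\in\partial C$ to $(1,1)\in C^\circ$.) What you would need is that $f^{-N}$ is also monotone, so that $f^N$ is a self-homeomorphism of $(x+C)\cap\Omega$; that is not part of the hypotheses and is not something you have established. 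Without the invariance, you have no map on the non-convex set $(x+\partial C)\cap[a,b]_C$ to which a fixed-point theorem could be applied, and Brouwer on the convex set $(x+C)\cap[a,b]_C$ may simply return $b$.

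\textbf{The period bound is not there.} In the approximation step your $\tilde w_n$ is produced (in your own scheme) as a fixed point of $f^{N_n}$ with $N_n$ divisible by the period of $x_n$, and you have no control on that period. Lemma~\ref{stable}(ii) gives invariant neighbourhoods for $a$ and $b$, but that says nothing about periods of points on $x_n+\partial C$; you are essentially deferring the whole problem to this ``principal difficulty''.

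The paper sidesteps both issues with a different mechanism. With $y\ll_C v\ll_C z$ periodic and $g=f^r$ fixing $y,z$, it uses Zorn's Lemma to produce a \emph{maximal totally $\leq_C$-ordered} set $M\subseteq\mathrm{Fix}(g)\cap[y,z]_C$ with endpoints $y,z$, and shows $M$ is connected by invoking the Dancer--Hess result that between two ordered stable fixed points there is a third. Then the continuous function $h(w)=\inf_{\phi\in S_C}\phi(w)$ satisfies $h(y)<0<h(z)$, so by connectedness some $\zeta\in M$ has $h(\zeta)=0$, i.e.\ $\zeta\in\partial C$. This produces a periodic point on $\partial C$ directly, with period dividing the fixed integer $r$, so no approximation or period bound is needed, and no invariance of $\partial C$ under $g$ is ever used.
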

\begin{proof}
We will only give the proof for  $(x+\partial C)\cap \Omega$ and leave the other case, which can be proved in an analogous way, to the reader. 
First note that by considering the monotone dynamical system $(g,\Omega -x)$, where $g(v) = f(v+x)-x$, we may as well assume that $x=0$. Now let $v\in\partial C\cap \Omega$ and $S\subseteq \Omega$ be a neighbourhood of $v$. 
Then, given $u\in C^\circ$, there exists an $\epsilon >0$ such that $[v-\epsilon u,v+\epsilon u]_C^\circ\subseteq S$. Now $(v+C^\circ)\cap [v-\epsilon u,v+\epsilon u]_C^\circ$ is an open subset of $\Omega$, and hence contains a periodic point of $f$, say $z$. Likewise, $(v-C^\circ)\cap [v-\epsilon u,v+\epsilon u]_C^\circ$ contains a periodic point of $f$, say $y$. So, $v-\epsilon u\ll_cy\ll_C v\ll_C z\ll_C v+\epsilon u$, and $[y,z]_C\subset [v-\epsilon u,v+\epsilon u]_C^\circ$. 

Let $r$ be the least common multiple of the periods of $y$ and $z$, and write $g=f^r$. So, $y,z\in \mathrm{Fix}(g)$. Now let $\mathcal{M}$ be the collection of $M\subseteq \mathrm{Fix}(g)$ such that $M$ is totally $\leq_C$-ordered with $\min M =y$ and $\max M =z$, and order $\mathcal{M}$ by inclusion. Then  each chain $(M_\alpha)$ in $(\mathcal{M},\subseteq)$ has an upper bound, namely $\cup_\alpha M_\alpha$. Indeed, if $a,b\in \cup_\alpha M_\alpha$, then there exists an $\alpha$ such that $a,b\in M_\alpha$, and hence either $a\leq_Cb$ or $b\leq_C a$, as $M_\alpha$ is totally $\leq_C$-ordered. Thus, by Zorn's Lemma $(\mathcal{M},\subseteq)$ has a maximal element, say $M$. 

We claim that $M$ is a connected subset of $\Omega$. To show this we argue by contradiction. So, suppose that there exist $U,W\subseteq M$ non-empty and relatively open such that $U\cap W=\emptyset$ and $M= U\cup W$. We may as well assume that $y\in U$. Note that both $U$ and $W$ are totally $\leq_C$-ordered. Thus, $\{x_w\}$, $w\in (W,\preceq)$, forms a net, where $w_1\preceq w_2$ if $w_2\leq_C w_1$ and $x_w =w$ for all $w\in W$. Now for each $\phi\in S_C$ we have that $\{\phi(x_w)\}$ is a decreasing net that is bounded below by $\phi(y)$, and hence it converges. As $S_C$ spans $V^*$, we conclude that $\{x_w\}$ converges to say $w^*\in [y,z]_C$. As $C$ is closed, we get that $w^*\leq_C w$ for all $w\in W$.

Now let $U_0=\{u\in U\colon u\leq_C w^*\}$. Then $y\in U_0$ and $U_0$ is totally $\leq_C$-ordered. Thus, $\{x_u\}$, $u\in (U_0,\preceq)$ forms a net, where $u_1\preceq u_2$ if $u_1\leq_C u_2$ and $x_u = u$ for all $u\in U_0$. As before, $\{x_u\}$, $u\in U_0$, converges to say $u^*\in [y,z]_C$, and $u\leq_C u^*$ for all $u\in U_0$. 

Note that as $\mathrm{Fix}(g)$ is closed, $w^*$ and $u^*$ are fixed points of $g$. To derive a contradiction that proves the claim we distinguish two cases: $u^*\neq w^*$ and $u^*=w^*$. If $u^*\neq  w^*$, then $u^*<_C w^*$ and $u^*$ and $w^*$ are stable fixed points of $g$ by Lemma \ref{stable}(ii). It now follows from \cite[Proposition 1]{DH} that there exists $\eta\in \mathrm{Fix}(g)$ with $u^*<_C\eta <_C w^*$. Note that $U\cup W=M$ implies that $\eta\not\in M$ and $M\cup\{\eta\}\in\mathcal{M}$, as each $w\in W$ satisfies $\eta<_C w^*\leq_C w$, each $u\in U_0$ satisfies $u\leq_C u^*<_C\eta$, and for each $u\in U\setminus U_0$ there exists a $w\in W$ with $\eta<_Cw^*\leq_Cw\leq_C u$. This, however, contradicts the maximality of $M$. On the other hand, if $u^*=w^*$, then writing $\xi = u^*=w^*$ we see that $\xi\not\in M$. Indeed, if $\xi \in M$, then either $\xi \in U$ or $\xi \in W$, which is impossible as $\xi \in \overline{U}\cap\overline{W}$, $U$ and $W$ are relatively open, and $U\cap W=\emptyset$.  As in the previous case, one can check that $M\cup\{\xi\}$ belongs to $\mathcal{M}$, which again contradicts the maximality of $M$. This shows that $M$ is connected. 

To complete the proof we consider the continuous  function $h\colon V\to\mathbb{R}$ given by 
\[
h(x)= \inf_{\phi \in S_C} \phi(x)\mbox{\quad for }x\in V.
\]  
Recall that $0\leq_C v\ll_C z$ and $y\ll_C v$. So, for each $\phi\in S_C$ we have that $0\leq \phi(v)<\phi(z)$, which implies that $h(z) >0$, since $S_C$ is compact. Moreover, there exists $\phi^*\in S_C$ such that $\phi^*(v) =0$, so that $\phi^*(y)<\phi^*(v)=0$. This implies that $h(y)<0$. Now, as $h$ is continuous and $M$ is connected, $h(M)$ is a connected subset of $\mathbb{R}$, and hence there exists $\zeta\in M$ such that $h(\zeta) =0$. It follows that $\phi(\zeta) \geq 0$  for all $\phi\in S_C$, and there exists $\phi'\in S_C$ such that $\phi'(\zeta)=0$, since $S_C$ is compact. Thus, $\zeta\in\partial C\cap[y,z]_C\subseteq \partial C\cap S$ and $f^r(\zeta) =g(\zeta)= \zeta$, which completes the proof.
\end{proof}
The proof of  the main result  relies on  a couple of geometric lemmas  concerning convexity of finite dimensional solid closed cones. For basic convexity notions we follow the terminology of \cite{Rock}.   Given $x\in \partial C$ we  write
\[
\nu(x) =\{\phi \in \partial S_C\colon \phi(x) =0\}.
\]
Note that $\nu(x) = \nu(\lambda x)$ for all $\lambda>0$, and $\nu(x)$ is non-empty for each $x\in\partial C$, as each $x\in\partial C$ has a supporting functional. 

We will consider $\partial S_C$ and $\partial C$ as topological spaces with the induced norm topology from $V^*$ and $V$, respectively.

\begin{lemma}\label{open} If $U\subseteq \partial S_C$ is open, then $\{x\in\partial C\colon \nu(x) \subseteq U\}$ is open.
\end{lemma}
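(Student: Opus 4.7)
The plan is to argue by sequential contradiction, using that $\partial S_C$ is compact. Recall that $S_C$ is a compact convex subset of $V^*$, so its relative boundary $\partial S_C$ is closed in $S_C$, hence compact.

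Write $A = \{x\in\partial C\colon \nu(x)\subseteq U\}$ and suppose for contradiction that $A$ is not open in $\partial C$. Then there is some $x\in A$ and a sequence $(x_n)\subseteq \partial C$ with $x_n\to x$ such that $x_n\notin A$ for every $n$; that is, $\nu(x_n)\not\subseteq U$. For each $n$ choose
\[
\phi_n\in \nu(x_n)\setminus U \subseteq \partial S_C\setminus U.
\]
The set $\partial S_C\setminus U$ is closed in the compact space $\partial S_C$, so after passing to a subsequence we may assume $\phi_n\to \phi^*$ with $\phi^*\in\partial S_C\setminus U$.

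Now I would use joint continuity of the evaluation $(\phi,y)\mapsto \phi(y)$ on $V^*\times V$: from $\phi_n(x_n)=0$ and $(\phi_n,x_n)\to(\phi^*,x)$ we obtain $\phi^*(x)=0$. Combined with $\phi^*\in\partial S_C$, this gives $\phi^*\in\nu(x)$. But $x\in A$ forces $\phi^*\in U$, contradicting $\phi^*\notin U$. Hence $A$ is open in $\partial C$.

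There is really no main obstacle; the only things to keep track of are the two topological facts that $\partial S_C$ is compact (so the sequence $(\phi_n)$ has a convergent subsequence) and that $\partial S_C\setminus U$ is closed in $\partial S_C$ (so the limit still lies outside $U$). The argument is essentially the closed-graph-style fact that an upper semicontinuous, compact-valued correspondence pulls open sets back to open sets, applied to $x\mapsto\nu(x)$.
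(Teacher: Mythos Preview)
Your proof is correct and follows essentially the same sequential-compactness contradiction as the paper: pick $\phi_n\in\nu(x_n)\setminus U$, pass to a convergent subsequence in the compact set $\partial S_C$, and use continuity of evaluation to conclude the limit lies in $\nu(x)\subseteq U$, a contradiction. The only cosmetic difference is that the paper writes out the norm estimate $|\phi(z)-\phi_n(z_n)|\leq \|\phi\|\|z-z_n\|+\|\phi-\phi_n\|\|z_n\|$ explicitly where you invoke joint continuity of $(\phi,y)\mapsto\phi(y)$.
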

\begin{proof}
Suppose by way of contradiction that there exists $z\in\partial C$ with $\nu(z)\subseteq U$ and a sequence $\{z_n\}$ in $\partial C$ converging to $z$ with $\nu(z_n)\not\subseteq U$ for all $n\geq 1$. Then for each $n\geq 1$ there exists  a $\phi_n\in \nu(z_n)$ with $\phi_n\not\in U$. As $\partial S_C$ is compact, we may assume after taking a subsequence that $\{\phi_n\}$ converges to $\phi \in\partial S_C$.  Now note that 
\[
0\leq \phi(z) \leq |\phi(z) -\phi(z_n)| +|\phi(z_n)-\phi_n(z_n)|\leq \|\phi\|\|z-z_n\| +\|\phi-\phi_n\|\|z_n\|
\]
for all $n\geq 1$. Since the right-hand side converges to $0$ as $n\to \infty$, we conclude that $\phi(z) =0$, and hence  $\phi\in\nu(z)\subseteq U$. This is impossible, since $\{\phi_n\}$ converges to $\phi \in U$, $\phi_n\not\in U$ for all $n\geq 1$, and $U$ is open. 
\end{proof}
\begin{remark} \label{remark} Note that given $U\subseteq \partial S_C$ open, the set $\{x\in\partial C\colon \nu(x)\subseteq U\}$ may be empty.
If, however, $\phi\in \partial S_C$ is an exposed point of $S_C$, then by definition there exists $y\in\partial C$ such that  $\phi(y) =0$ and $\psi(y)>0$ for all $\psi\neq \phi$ in $\partial S_C$. 
In that case $\nu(\lambda y) =\{\phi\}$ for all $\lambda >0$. Thus, for any neighbourhood $U$ of an exposed point $\phi$ in $\partial S_C$ we know that $\{x\in\partial C\colon \nu(x)\subseteq U\}\cap W$ is non-empty and open, for all non-empty neighbourhoods $W$ of $0$ in $V$.
\end{remark}

By Straszewicz's Theorem \cite[Theorem 18.6]{Rock} the exposed points of $S_C$ are dense in the extreme points of $S_C$.  As $S_C$ is the convex hull of its extreme points,   $S_C$ is also the convex hull of its exposed points.  Let $\psi_1,\ldots,\psi_d$ be linearly independent  exposed points of $S_C$, where $d=\dim V^*=\dim V$, and let 
\begin{equation}\label{K}
K =\left\{\sum_{i=1}^d \lambda_i\psi_i\colon \lambda_1,\ldots,\lambda_d\geq 0\right \},
\end{equation}
which is a solid closed cone in $V^*$. The dual cone of $K$ is $K^*=\{x\in V\colon \psi_i(x)\geq 0\mbox{ for all }i=1,\ldots,d\}$,
which is also closed and solid. Furthermore let 
\begin{equation}\label{psi}
\psi= \frac{1}{d}\sum_{i=1}^d\psi_i.
\end{equation}
Then $\psi$ is  a strictly positive functional for $K^*$, that is to say, $\psi(x)>0$ for all $x\in K^*\setminus\{0\}$.  
\begin{lemma}\label{perturb} For $i=1,\ldots,d$ there exist  neighbourhoods $U_i$ of $\psi_i$ in $\partial S_C$ such that if $\phi_i\in U_i$ for $i=1,\ldots d$, then
\begin{enumerate}[(i)] 
\item $\phi_1,\ldots,\phi_d$ are linearly independent. 
\item $\psi$ is a strictly positive functional for the solid closed cone \[K'=\{x\in V\colon \phi_i(x)\geq 0\mbox{ for all } i=1,\ldots,d\}.\]
\end{enumerate} 
\end{lemma}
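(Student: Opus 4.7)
My plan is to reduce both assertions to the openness of two standard algebraic conditions: the nonvanishing of a determinant, and the strict positivity of the fixed functional $\psi$ on a finite set of vectors that depend continuously on the data. Since $\partial S_C$ carries the norm topology inherited from $V^*$, everything is best phrased as continuity on $(V^*)^d$ around the point $(\psi_1,\ldots,\psi_d)$.

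For (i), I would fix any basis of $V^*$ and use that $(\phi_1,\ldots,\phi_d)\mapsto\det(\phi_1,\ldots,\phi_d)$ is continuous on $(V^*)^d$ and nonzero at $(\psi_1,\ldots,\psi_d)$ by the standing hypothesis. Hence there exist relative neighbourhoods $U_i^{(0)}$ of $\psi_i$ in $\partial S_C$ on which the determinant stays nonzero, so any choice $\phi_i\in U_i^{(0)}$ is automatically linearly independent.

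For (ii), on $U_1^{(0)}\times\cdots\times U_d^{(0)}$ the perturbed functionals $\phi_i$ form a basis of $V^*$, so I would define its dual basis $v_1(\phi),\ldots,v_d(\phi)\in V$ by $\phi_j(v_i(\phi))=\delta_{ij}$. Cramer's rule (with the nonvanishing denominator from (i)) shows that each $v_i(\phi)$ is continuous in $(\phi_1,\ldots,\phi_d)$. The expansion $x=\sum_i\phi_i(x)v_i(\phi)$ valid for every $x\in V$ then yields
\[
K'=\left\{\sum_{i=1}^d t_i v_i(\phi)\colon t_1,\ldots,t_d\geq 0\right\},
\]
so $\psi$ is strictly positive on $K'\setminus\{0\}$ if and only if $\psi(v_i(\phi))>0$ for every $i$. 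At the unperturbed point, writing $u_i=v_i(\psi_1,\ldots,\psi_d)$, we have $\psi_j(u_i)=\delta_{ij}$, and hence
\[
\psi(u_i)=\frac{1}{d}\sum_{j=1}^d\psi_j(u_i)=\frac{1}{d}>0.
\]
By continuity of $\phi\mapsto\psi(v_i(\phi))$, I can shrink each $U_i^{(0)}$ to a smaller neighbourhood $U_i$ of $\psi_i$ on which $\psi(v_i(\phi))>0$ holds simultaneously for all $i=1,\ldots,d$. These $U_i$ then realise both (i) and (ii).

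I do not foresee a serious obstacle. The only step deserving a word of care is the identification of $K'$ as the simplicial cone spanned by the dual basis $v_1(\phi),\ldots,v_d(\phi)$; this is immediate from the expansion $x=\sum_i\phi_i(x)v_i(\phi)$, and in particular no geometric property of $C$ itself enters once the perturbed $\phi_i$ are known to form a basis of $V^*$.
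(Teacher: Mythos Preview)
Your proposal is correct and is essentially the same argument as the paper's, just viewed dually: the paper writes $\psi=\sum_i (B^{-1}\psi)_i\phi_i$ and uses continuity of $L\mapsto L^{-1}$ to keep the coefficients $(B^{-1}\psi)_i$ positive, whereas you pass to the dual basis $v_i(\phi)$ in $V$ and keep $\psi(v_i(\phi))$ positive; since $\psi(v_i(\phi))=(B^{-1}\psi)_i$, these are literally the same quantities. Both parts therefore rest on the identical continuity facts (openness of invertibility for (i), continuity of the inverse for (ii)), and your Cramer's-rule justification is simply an explicit way of saying what the paper leaves implicit.
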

\begin{proof}
The first assertion follows directly from the fact that the set of invertible linear maps $L\colon \mathbb{R}^d\to V^*$ is open by considering the invertible linear map 
$A\colon x\mapsto \sum_{i=1}^d x_i\psi_i$ for $x\in\mathbb{R}^d$. 

The second assertion follows from the fact that the map $L\mapsto L^{-1}$ is continuous on the set of invertible linear maps from $\mathbb{R}^d$ onto $V^*$.
Indeed, consider $A$ as above. Now if $\phi_1,\ldots,\phi_d$ are linearly independent, then the linear map $B\colon x\mapsto \sum_{i=1}^d x_i\phi_i$ is invertible. 
Thus, 
\[
\|B^{-1}\psi -(1/d,\ldots,1/d)\| = \|B^{-1}\psi -A^{-1}\psi\|\leq \|B^{-1}-A^{-1}\|\|\psi\|,
\] 
implies that $(B^{-1}\psi)_i>0$ for all $i=1,\ldots d$, if $B^{-1}$ is sufficiently close to $A^{-1}$.  As $\psi = B(B^{-1}\psi) = \sum_{i=1}^d (B^{-1}\psi)_i\phi_i$, we conclude that 
$\psi(x) =  \sum_{i=1}^d (B^{-1}\psi)_i\phi_i(x) >0$ for all $x\in K'\setminus\{0\}$, since $\phi_j(x)>0$ for some $j$ and $(B^{-1}\psi)_i>0$ for all $i$. 
\end{proof}

\section{Proof of periodicity: Hirsch's conjecture}
\begin{theorem}
If $(f,\Omega)$ is a monotone dynamical system, where $f\colon \Omega\to\Omega$ is a homeomorphism on an open connected subset $\Omega$ of a finite dimensional real vector space $V$ and  $\mathrm{Per}(f)$ is dense in $\Omega$, then $f$ is periodic.
\end{theorem}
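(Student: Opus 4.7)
The plan is to invoke Montgomery's theorem (Theorem \ref{Mont}): it suffices to show that every $x\in\Omega$ is a periodic point of $f$. Fix such an $x$. I will construct an integer $r\geq 1$ with $f^r(x)=x$ by squeezing $f^r(x)-x$ inside two perturbed polyhedral cones that share a common strictly positive linear functional.

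Choose linearly independent exposed points $\psi_1,\ldots,\psi_d$ of $S_C$ (these exist by Straszewicz's theorem, as recalled just before Lemma \ref{perturb}), and let $\psi=\frac{1}{d}\sum_{i=1}^d\psi_i$. Lemma \ref{perturb} yields neighbourhoods $U_i\subseteq\partial S_C$ of $\psi_i$ such that any choice of $\phi_i\in U_i$ gives a linearly independent system with $\psi$ strictly positive for the resulting cone $\{v\in V\colon\phi_i(v)\geq 0,\ i=1,\ldots,d\}$. Next, combining Proposition \ref{dense} (density of $\mathrm{Per}(f)$ in $(x\pm\partial C)\cap\Omega$), Lemma \ref{open} (the condition ``$\nu(\,\cdot\,)\subseteq U_i$'' is open in $\partial C$), and Remark \ref{remark} (such points exist near $0$ in $\partial C$ because $\psi_i$ is exposed), I pick periodic points $z_1,\ldots,z_d\in(x-\partial C)\cap\Omega$ arbitrarily close to $x$ with $\nu(x-z_i)\subseteq U_i$, and periodic points $w_1,\ldots,w_d\in(x+\partial C)\cap\Omega$ arbitrarily close to $x$ with $\nu(w_i-x)\subseteq U_i$. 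Let $r$ be the least common multiple of the periods of these $2d$ points and put $g=f^r$, so that $g(z_i)=z_i$ and $g(w_i)=w_i$ for each $i$.

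Select $\phi_i\in\nu(x-z_i)\subseteq U_i$ and $\phi_i'\in\nu(w_i-x)\subseteq U_i$. From $z_i\leq_C x$ and monotonicity of $g$ one obtains $z_i=g(z_i)\leq_C g(x)$, so $\phi_i(g(x)-z_i)\geq 0$; using $\phi_i(x-z_i)=0$ this becomes $\phi_i(g(x)-x)\geq 0$ for every $i$, hence $g(x)-x\in K'=\{v\colon\phi_i(v)\geq 0 \text{ for all } i\}$. Symmetrically, $x\leq_C w_i$ yields $\phi_i'(x-g(x))\geq 0$ for every $i$, so $x-g(x)\in K''=\{v\colon\phi_i'(v)\geq 0 \text{ for all } i\}$. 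By Lemma \ref{perturb}, $\psi$ is strictly positive on both $K'$ and $K''$. Therefore $\psi(g(x)-x)\geq 0$ and $\psi(g(x)-x)\leq 0$, forcing $\psi(g(x)-x)=0$; since $g(x)-x\in K'$ and $\psi>0$ on $K'\setminus\{0\}$, we conclude $g(x)=x$. Thus every $x\in\Omega$ is periodic, and Theorem \ref{Mont} delivers a single period.

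The main obstacle is the middle step: producing periodic points on the order-boundary faces $(x\pm\partial C)\cap\Omega$ whose supporting functionals land in prescribed small neighbourhoods of $d$ linearly independent exposed directions. This is exactly what the preparatory machinery of Section 2 is built for; once it is in place, the final step is a clean linear-algebraic squeeze powered by Lemma \ref{perturb}.
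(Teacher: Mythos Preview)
Your proposal is correct and follows essentially the same route as the paper: reduce to pointwise periodicity via Montgomery, use Proposition~\ref{dense}, Lemma~\ref{open}, and Remark~\ref{remark} to place $2d$ periodic points on $(x\pm\partial C)\cap\Omega$ whose supporting functionals lie in the neighbourhoods $U_i$ provided by Lemma~\ref{perturb}, and then conclude via the $\psi$-squeeze. The only differences are cosmetic---the paper translates to $x=0$ and phrases the squeeze via the sets $S_1,S_2$ rather than directly through $g(x)-x$---but the argument is the same.
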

\begin{proof}
By Montgomery's Theorem \ref{Mont} it suffices to show that each $x\in\Omega$ is periodic. So let $x\in\Omega$. By considering the dynamical system $(g,\Omega -x)$, where $g(v) = f(v+x)-x$ for $v\in\Omega-x$, we may as well assume that $x=0$. 
From  Proposition \ref{dense} we know that the periodic points of $f$ are dense in $\partial C\cap \Omega$ and $-\partial C\cap \Omega$. 

Now as above choose $\psi_1,\ldots,\psi_d$ linearly independent  exposed points of $S_C$, and let $\psi = \frac{1}{d}\sum_{i=1}^ d \psi_i$. By Lemma \ref{perturb} there exist open neighbourhoods $U_i$ of $\psi_i$ in $\partial S_C$ for $i=1,\ldots,d$ such that if we take $\phi_i\in U_i$ for $i=1,\ldots,d$, then $\phi_1,\ldots,\phi_d$ are linearly independent, and $\psi$ is a strictly positive functional for the solid closed cone $K'=\{x\in V\colon \phi_i(x)\geq 0\mbox{ for all } i=1,\ldots,d\}$. 

Lemma \ref{open} implies that for each $i$ we have that $W_i=\{x\in\partial C\colon \nu(x)\subseteq U_i\}$ is open in $\partial C$. It follows that  $W_i\cap \Omega$ and  $-W_i\cap \Omega$ are non-empty and open in $\partial C\cap \Omega$ and $-\partial C\cap\Omega$, respectively, as $\psi_i$ is an exposed point of $\partial S_C$, see Remark \ref{remark}.  As  the periodic points of $f$ are dense in $-\partial C\cap \Omega$, there exists a periodic point $x_i\in -W_i\cap \Omega$ of $f$ with period, say $p_i$, for $i=1,\ldots, d$.  Let $\rho_i\in U_i$ be such that $\rho_i\in \nu(x_i)$.  So, $\rho_1,\ldots,\rho_d$ are linearly independent and $\rho_i(x_i)=0$ for $i=1,\ldots, d$. 

Now consider the set $S_1=\{y\in \Omega \colon x_i\leq_C y \mbox{ for all }i=1,\ldots,d\}$. Then $0\in S_1$ and  for each $y\in S_1$ we have that $\rho_i(y)\geq \rho_i(x_i)=0$ for all $i=1,\ldots, d$. So 
$y$ belongs to the solid closed cone $K_1'=\{v\in V\colon \rho_i(v)\geq 0\mbox{ for all }i=1,\ldots,d\}$. Moreover, $\psi$ is a strictly positive functional for $K'_1$. 

Likewise, there exist periodic points $z_1,\ldots,z_d$ with $z_i\in W_i\cap\Omega$ for  $i=1,\ldots,d$. Let $q_i$ be the period of $z_i$, and take $\sigma_i\in \nu(z_i)\subseteq U_i$ for all $i$. 
Then $\sigma_1,\ldots,\sigma_d$ are linearly independent and $\sigma_i(z_i)=0$ for all $i$. Now consider $S_2= \{y\in \Omega \colon y\leq_C z_i \mbox{ for all }i=1,\ldots,d\}$. 
So, $0\in S_2$ and for each $y\in S_2$ we have that $\sigma_i(y) \leq \sigma_i(z_i)=0$ for all $i$, and hence $y$ belongs to $-K_2'$, where $K_2'=\{v\in V\colon \sigma_i(v)\geq 0\mbox{ for all }i=1,\ldots,d\}$ is a solid closed cone. 
Again $\psi$ is a strictly positive linear functional for $K'_2$. 

Thus, $S_1\cap S_2\subseteq K_1'\cap (-K'_2)$ and $0\in S_1\cap S_2$. In fact, we have that $S_1\cap S_2= \{0\}$. Indeed, if $y\in S_1\cap S_2$, then $\psi(y) \geq 0$, as $y\in  K_1'$. But also $\psi(y)\leq 0$, as $y\in  -K_2'$. Thus $\psi(y) =0$, which implies that $y=0$, since $\psi$ is a strictly positive linear functional for $K_1'$. 

To complete the proof note that if we let $r$ be the least common multiple of $p_1,\ldots,p_d, q_1,\ldots,q_d$, then $f^r(0) \in S_1\cap S_2$, as $f$ is monotone. Thus, $f^r(0) =0$ and we are done. 
\end{proof}

\footnotesize

\end{document}